\title{A quantitative version of Steinhaus' theorem for compact,
  connected, rank-one symmetric spaces}
\author{Fernando M\'ario de Oliveira Filho}
\address{F.M.~de Oliveira Filho, Institute of Mathematics, FU Berlin,
  Arnimallee~2, 14195 Berlin, Germany}
\email{fmario@math.fu-berlin.de}
\author{Frank Vallentin} 
\address{F.~Vallentin, Delft Institute of Applied Mathematics,
  Technical University of Delft, P.O.~Box 5031, 2600 GA Delft, The
  Netherlands}
\email{f.vallentin@tudelft.nl}
\thanks{The first author was partially supported by CAPES/Brazil under
  grant BEX 2421/04-6, and the research was partially carried out at the
  Centrum Wiskunde \& Informatica, The Netherlands. The second author
  is supported by Vidi grant 639.032.917 from the Dutch Organization
  for Scientific Research (NWO)}
\subjclass{42B05, 90C05} 
\keywords{Steinhaus' theorem, geometric Ramsey theory, linear programming, orthogonal polynomials}
\date{November 7, 2012}
\newcommand{\defi}[1]{\textit{#1}}
\newcommand{\R}{\mathbb{R}}
\newcommand{\Z}{\mathbb{Z}}
\newcommand{\C}{\mathbb{C}}
\newcommand{\Hu}{\mathbb{H}}
\newcommand{\Oc}{\mathbb{O}}
\newtheorem{defin}{Definition}[section]
\newtheorem{theorem}[defin]{Theorem}
\newtheorem{lemma}[defin]{Lemma}
\newtheorem{question}{Question}
\newcommand{\dens}{\overline{\delta}}
\newcommand{\RP}{{\R}\mathrm{P}}
\newcommand{\CP}{{\C}\mathrm{P}}
\newcommand{\HP}{{\Hu}\mathrm{P}}
\newcommand{\OP}{{\Oc}\mathrm{P}}
\def\Pab{P^{(\alpha,\beta)}}
\def\Pabm(#1,#2){P^{(\alpha #1,\beta #2)}}
\def\bPabm(#1,#2){\overline{P}^{(\alpha #1,\beta #2)}}
\def\tab[#1,#2]{t^{(\alpha,\beta)}_{#1,#2}}
\def\tabm(#1,#2)[#3,#4]{t^{(\alpha #1,\beta #2)}_{#3, #4}}
\def\lab{l^{(\alpha,\beta)}}
\def\breathe#1{\vadjust{\vskip#1\jot}}
\begin{document}

\begin{abstract}
  Let~$d_1$, $d_2$, \dots\ be a sequence of positive numbers that
  converges to zero. A generalization of Steinhaus' theorem due to
  Weil implies that, if a subset of a homogeneous Riemannian manifold
  has no pair of points at distances~$d_1$, $d_2$, \dots\ from each
  other, then it has to have measure zero. We present a quantitative
  version of this result for compact, connected, rank-one symmetric
  spaces, by showing how to choose distances so that the measure of a
  subset not containing pairs of points at these distances decays
  exponentially in the number of distances.
\end{abstract}

\maketitle

\markboth{F.M.~de Oliveira Filho and F.~Vallentin}{A quantitative version of Steinhaus' theorem}

\section{Introduction}
\label{sec:introduction}

\subsection{Steinhaus' theorem}

A well-known fact in measure theory is Steinhaus' theorem
(cf.~Steinhaus~\cite{Steinhaus}; see also Halmos~\cite{Halmos},
page~68). It states that if~$A$ is a Lebesgue measurable subset
of~$\R^n$ that has positive measure, then the origin lies in the
interior of the \defi{difference set}
\[
A - A = \{\, x - y : x, y \in A\,\}.
\]
Let~$H$ be a set of positive real numbers having
infimum~$0$. Steinhaus' theorem implies that the measure of a subset
of~$\R^n$ has to be zero, if it \defi{avoids} all the distances
in~$H$, i.e., if it does not contain any pair of points the distance
between which is in~$H$.

The \defi{upper density} (or simply \defi{density}) of a measurable
set~$A \subseteq \R^n$ is defined as
\[
\dens(A) = \limsup_{T \to \infty} \frac{\mu(A \cap [-T, T]^n)}{\mu([-T,T]^n)},
\]
where~$[-T,T]^n$ denotes the regular cube in~$\R^n$ with side~$2T$
centered at the origin and where~$\mu$ denotes the Lebesgue measure
in~$\R^n$. For a given set~$H$ of positive real numbers we let
\[
m_H(\R^n) = \sup\{\, \dens(A) : \text{$A \subseteq \R^n$ is measurable
and avoids all distances in~$H$}\,\}.
\]
In words,~$m_H(\R^n)$ is the maximum density a measurable subset
of~$\R^n$ can have if it avoids all distances in~$H$.  Steinhaus'
theorem implies that~$m_H(\R^n) = 0$ if~$\inf H = 0$.

Sz\'ekely~\cite{Szekely} asked whether the following continuity
statement applies to Steinhaus' theorem:

\begin{question}
\label{q:limit}
Let~$d_1$, $d_2$, \dots\ be a sequence of positive real numbers that
converges to zero. Is it true that
\[
\lim_{N \to \infty} m_{\{d_1, \ldots, d_N\}}(\R^n) = 0\text{?}
\]
\end{question}

The following example of Falconer~\cite{Falconer} shows that
Question~\ref{q:limit} has a negative answer in dimension one: We
consider the set
\[
A = \bigcup_{k \in \Z} (2k, 2k + 1).
\]
Set~$A$ has density~$1/2$ and avoids all odd distances. The scaled
set~$3^{-N} A$ also has density~$1/2$, and in particular it avoids
distances~$1$, $3^{-1}$, \dots,~$3^{-N}$. So the statement of
Question~\ref{q:limit} fails for the zero-convergent sequence~$1$,
$3^{-1}$, $3^{-2}$,~\dots\ when~$n=1$. It turns out that the real line
is an exceptional case: Falconer~\cite{Falconer} showed that the
answer to Question~\ref{q:limit} is affirmative for~$n \geq 2$.

\subsection{Quantitative statement}

Now we aim at transforming Question~\ref{q:limit} into a question in
hard analysis: Is it possible to turn Question~\ref{q:limit} into a
quantitative statement? Or, can one say anything about the rate of
convergence? So we are led to ask:

\begin{question}
\label{q:rate}
How fast does~$m_{\{d_1,\ldots,d_N\}}(\R^n)$ converge to zero if one is
allowed to choose the distances~$d_1$, \dots,~$d_N$?
\end{question}

One answer was given by Bukh~\cite{Bukh} based on a version of
Szemer\'edi's regularity lemma for measurable functions (see also
Chapter~2.4 in Tao~\cite{Tao}). He showed that the density decays
exponentially, and gave the exact asymptotic value for the density, as
the number of distances goes to infinity.

Oliveira and Vallentin~\cite{OliveiraVallentin} prove a slightly
weaker result that however already shows exponential decay. They show
that, for all~$N \geq 1$, there is a number~$r = r(N)$ such that, if
the sequence~$d_1$, \dots,~$d_N$ satisfies
\[
\text{$d_1 / d_2 > r$, $d_2 / d_3 > r$, \dots,~$d_{N-1}/d_N > r$},
\]
then
\[
m_{\{d_1,\ldots,d_N\}}(\R^n) \leq 2^{-N}.
\]
The proof of this result is rather short and is based on linear
programming and harmonic analysis.

The simplest Riemannian manifolds are, next to the Euclidean
space~$\R^n$, the connected, compact, rank-one symmetric spaces. They
are also called continuous, compact, two-point homogeneous spaces
because the isometry group acts transitively on pairs of points at
distance~$d$ for every~$d > 0$. Such spaces (with more than one point)
were classified by Wang~\cite{Wang}, the complete list being: the unit
sphere~$S^{n-1}$, the real projective space~$\RP^{n-1}$, the complex
projective space~$\CP^{n-1}$, the quaternionic projective
space~$\HP^{n-1}$, and the octonionic projective plane~$\OP^2$. Here
observe that~$S^{n-1} \subseteq \R^n$ and that~$\RP^{n-1}$ is the set
of all lines passing through the origin of~$\R^n$; the
spaces~$\CP^{n-1}$, $\HP^{n-1}$, and~$\OP^{n-1}$ are defined
similarly. Notice also that the \textit{real dimension} of~$S^{n-1}$,
that is, its dimension as a real manifold, is~$n-1$. Similarly, the
real dimension of~$\RP^{n-1}$ is~$n-1$, and the real dimensions
of~$\CP^{n-1}$, $\HP^{n-1}$, and~$\OP^2$ are~$2(n-1)$, $4(n-1)$,
and~$16$, respectively.

In this paper we consider quantitative versions of Steinhaus' theorem
for these spaces, providing answers to the analogues of
Questions~\ref{q:limit} and~\ref{q:rate} for them. It turns out that,
as in the case of the Euclidean space, one is able to show exponential
decay when the real dimension of the space considered is at least~$2$,
and when the real dimension is~$1$ it is possible to show that the
answer to the analogue of Question~\ref{q:limit} is negative.

To make ideas precise, we start by considering a generalization of
Steinhaus' theorem to locally compact groups due to Weil
(cf.~Weil~\cite{Weil}, page~50). Let~$G$ be a locally compact group and let~$A$
be a measurable subset of~$G$ having positive measure (here we
consider the Haar measure for~$G$). Weil's result
then says that the set
\[
A A^{-1} = \{\, xy^{-1} : x, y \in A\,\}
\]
contains the identity in its interior. A short and elementary proof is
due to Stromberg~\cite{Stromberg}. As in the case of~$\R^n$, this
theorem implies that subsets of a homogeneous space avoiding a set of
distances with infimum zero need to have measure zero. More precisely,
let~$M$ be a Riemannian manifold with isometry group~$G$ acting
transitively on~$M$, i.e.,~$M$ is a homogeneous $G$-space whose
measure is induced by the Haar measure of~$G$. Let~$H$ be a set of
positive real numbers having infimum zero. Weil's result then says
that the measure of a subset of~$M$ has to be zero, if it avoids all
distances given in~$H$.

If~$M$ is compact we normalize the measure~$\mu$ on~$M$ so
that~$\mu(M) = 1$. For a set of distances~$H$ we let
\[
m_H(M) = \sup\{\, \mu(A) : \text{$A \subseteq M$ is measurable and
avoids all distances in~$H$}\,\}.
\]
Then by Weil's generalization of Steinhaus' theorem, we have
that~$m_H(M) = 0$ if~$\inf H = 0$.

The main result of this paper is the following:
\medskip

\noindent
\textbf{Main result.}\enspace\emph{Let~$M$ be a compact, connected,
  rank-one symmetric space of real dimension at least~$2$. If
  distances~$d_1$, \dots,~$d_N$ are given so that~$d_1 \gg d_2 \gg
  \cdots \gg d_N$, then~$m_{\{d_1,\ldots,d_N\}}(M) \leq 2^{-N}$.  }
\medskip

In the statement of the main result we are purposely vague: The essence of
the statement is that, as long as the distances~$d_1$, \dots,~$d_N$
are sufficiently spaced out, then the maximum density is exponentially
small. In Section~\ref{sec:dimensiontwo} (in
Theorem~\ref{thm:main}) we will give a precise
statement of the main result in which we specify how the
distances should space out in order for the conclusion of the theorem
to hold.

Notice that this theorem implies that, if~$d_1$, $d_2$, \dots\ is a
sequence of distances converging to zero and the real dimension of the
space is at least~$2$, then
\begin{equation}
\label{eq:m-dens-limit}
\lim_{N \to \infty} m_{\{d_1, \ldots, d_N\}}(M) = 0,
\end{equation}
thus giving a positive answer to the analogue of
Question~\ref{q:limit} for the space~$M$. We will show in
Section~\ref{sec:dimensionone} that if the real dimension of~$M$
is~$1$, then there are zero-converging sequences~$d_1$, $d_2$, \dots\
of distances for which the limit in~\eqref{eq:m-dens-limit} is
nonzero, thus showing that Question~\ref{q:limit} has a negative
answer in dimension~$1$. So the big picture is similar to the one
of~$\R^n$ discussed earlier, and also the example that shows lack of decay
in dimension one is similar to the one we gave before for the real line.

\subsection{Outline of the paper}

In Section~\ref{sec:dimensionone} we show that the main result fails
for manifolds with real dimension~$1$.

We prove the main result in Section~\ref{sec:dimensiontwo}. For the
proof of this theorem we will use the generalization of the Lov\'asz
theta number for distance graphs defined over compact metric spaces
given by Bachoc, Nebe, Oliveira, and Vallentin~\cite{BNOV}. This
generalization will provide upper bounds for~$m_H(M)$ by means of the
solution of an infinite-dimensional semidefinite programming problem,
which reduces to an infinite-dimensional linear programming problem
when the Riemannian manifold~$M$ is a symmetric space. We recall the
necessary background for this approach in Section~\ref{sec:theta}.

Finally, in Section~\ref{sec:disc} we quickly discuss some questions
related to our approach.

\section{Counter-examples in dimension one}
\label{sec:dimensionone}

There are only two compact, connected, rank-one symmetric spaces~$M$
of real dimension one: The unit circle~$S^1$ and the real projective
line~$\RP^1$. For each of these two spaces we show that there is a
zero-convergent sequence of distances~$d_1$, $d_2$, \dots\ and
measurable subsets~$C_1$, $C_2$,~\dots\ of~$M$ all having measure at
least~$L$ for some positive constant~$L$, such that~$C_k$ avoids
distances~$d_1$, \dots,~$d_k$. This implies a negative answer to the
analogue of Question~\ref{q:limit} for the space~$M$.

We aim at presenting a single construction that works for both the
unit circle and the real projective line. To this end, for~$k = 1$,
$2$, \dots, write~$\theta_k = (\pi/2)/3^k$ and~$N_k = \lceil 3^k / 2
\rceil$. Let~$E_k = \{ e_{k,1}, \ldots, e_{k,{N_k}} \}$ be the set of
vectors in~$\R^2$ given as follows:

\begin{enumerate}
\item $e_{k,1} = (1, 0)$;

\item for~$i > 1$, vector~$e_{k,i}$ is equal to vector~$e_{k,{i-1}}$ rotated
  counterclockwise by an angle of~$2\theta_k$.
\end{enumerate}

Figure~\ref{fig:counter-ex} illustrates this construction. Notice that all
vectors in~$E_k$ belong to the unit circle~$S^1$ and moreover they all
lie in the nonnegative quadrant of~$\R^2$.

\begin{figure}[t]
\dimen0=.3\hsize
\def\picbox#1#2{#1}
\noindent
\hfill\picbox{\includegraphics{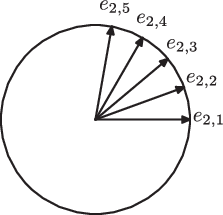}}{(a)}%
\hfill\picbox{\includegraphics{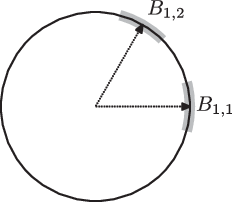}}{(b)}%
\hfill\picbox{\includegraphics{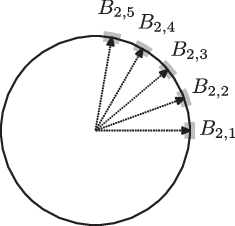}}{(c)}\hfill{}

\caption{On the left we have the vectors in set~$E_2$; the angle
  between any two consecutive vectors is~$\pi/9$. In the middle we
  show in gray the points in the set~$C_1 \subseteq S^1$ (that is, we
  take~$M = S^1$ in this example); notice that set~$C_1$ is the union
  of~$B_{1,1}$ and~$B_{1,2}$. Finally, on the right we show in gray
  the points in the set~$C_2 \subseteq S^1$; set~$C_2$ is the union
  of~$B_{2,1}$, \dots,~$B_{2,5}$.}
\label{fig:counter-ex}
\end{figure}

When~$M$ is the real projective line, we represent its elements by
unit vectors in~$\R^2$ by identifying antipodal vectors. So we can
see~$E_k$ as a subset of~$M$. Moreover, notice that by construction
the points in~$E_k$ are all distinct in~$M$.

Let~$d$ be the distance function of~$M$. If~$M = S^1$, then for~$x$,
$y \in S^1$ we have~$d(x, y) = \arccos(x \cdot y)$. If~$M = \RP^1$,
then~$d(x, y) = \arccos(2 (x \cdot y)^2 - 1)$. So write~$d_k =
d(e_{k,1}, e_{k,2})/2$. From the formula for the
distance we see that
\[
d(e_{k,i}, e_{k,j}) = 2 |i - j| d_k\qquad\text{for~$i$, $j = 1$,
  \dots,~$N_k$.}
\]
Moreover, one has~$d_k = 3 d_{k+1}$.

For~$i = 1$, \dots,~$N_k$ consider the set~$B_{k, i} = \{\, x \in M : d(x,
e_{k,i}) < d_k / 2\, \}$ and let
\[
C_k = \bigcup_{i = 1}^{N_k} B_{k,i}.
\]
Figure~\ref{fig:counter-ex} illustrates the construction of set~$C_k$.
We claim that set~$C_k$ avoids all distances of the form~$3^i d_k$
for~$i = 0$, \dots,~$k$.

Indeed, let~$x$, $y \in C_k$ and suppose~$x \in B_{k,i}$ and~$y \in
B_{k,j}$. Then
\[
d(x, y) \leq d(x, e_{k,i}) + d(e_{k,i}, e_{k,j}) + d(y, e_{k,j})
< d_k + 2 |i-j| d_k = (2 |i-j| + 1) d_k.
\]
On the other hand,
\[
d(e_{k,i}, e_{k,j}) \leq d(e_{k,i}, x) + d(x, y) + d(e_{k,j}, y) < d_k + d(x, y),
\]
whence it follows that~$d(x, y) > (2 |i-j| - 1) d_k$.

So we see that~$d(x, y)$ is never of the form~$(2l + 1) d_k$ for~$l =
0$, \dots,~$N_k - 1$. In particular, it is never of the form~$3^i d_k$
for~$i = 0$, \dots,~$k$, proving the claim.

Now consider the sequence~$d_1$, $d_2$, \dots\ of distances. This is a
zero-convergent sequence. Moreover, by our construction, and since
also~$d_k = 3 d_{k+1}$, set~$C_k$ avoids distances~$d_1$,
\dots,~$d_k$. 

Finally, notice that all sets~$B_{k, i}$ for fixed~$k$ and~$i = 1$,
\dots,~$N_k$ have the same positive measure. Moreover, since~$M$ has
real dimension~$1$, and since~$d_k = 3 d_{k+1}$, we have
\[
\mu(B_{k,i}) = 3 \mu(B_{k+1, i}).
\]
So we have
\[
\mu(C_k) = N_k 3^{1-k} \mu(B_{1, 1}) \geq (3/2)
\mu(B_{1,1}),
\]
and we see that the zero-convergent sequence of distances~$d_1$,
$d_2$, \dots\ and the sequence~$C_1$, $C_2$, \dots\ of subsets of~$M$
have the properties claimed in the beginning of the section.

\section{The theta number}
\label{sec:theta}
Let~$M$ be a compact Riemannian manifold with distance function~$d$
and~$G$ be its isometry group. Suppose~$M$ is a homogeneous $G$-space
and let~$\mu$ be the measure on~$M$ induced by the Haar measure
on~$G$, so that~$\mu$ is invariant under the action of~$G$. Moreover,
normalize~$\mu$ so that~$\mu(M) = 1$.

A \defi{kernel} is a continuous function~$K\colon M \times M \to
\R$. We say that~$K$ is \defi{positive} if the matrix~$\bigl(K(x_i,
x_j)\bigr)_{i,j=1}^N$ is positive semidefinite for all~$N$ and all
choices of points~$x_1$, \dots,~$x_N \in M$. We say that~$K$ is
\defi{invariant} if
\[
K(\sigma x, \sigma y) = K(x, y)\qquad
\text{for all~$\sigma \in G$ and~$x$, $y \in M$}.
\]

Now, given positive distances~$d_1$, \dots,~$d_N$, consider the
optimization problem
\begin{equation}
\label{opt:theta-primal}
\vcenter{\halign{\hfil#\enskip&#\hfil\cr
$\vartheta(M, \{d_1, \ldots, d_N\}) = \sup$&$\int_M \int_M K(x, y)\,
d\mu(x) d\mu(y)$\breathe1\cr
&$\int_M K(x, x)\, d\mu(x) = 1$,\breathe1\cr
&$K(x, y) = 0$\quad if~$d(x, y) \in \{ d_1, \ldots, d_N \}$,\cr
&$K\colon M \times M \to \R$ is a positive\cr
&\qquad and invariant kernel,\cr
}}
\end{equation}
which is an infinite-dimensional semidefinite programming
problem. This is a continuous version of the \textit{Lov\'asz theta
  number}, a graph parameter introduced by Lov\'asz~\cite{Lovasz79} to
upper bound the stability number of a finite graph.

Bachoc, Nebe, Oliveira, and Vallentin~\cite{BNOV} proved that
\[
\vartheta(M, \{ d_1, \ldots, d_N \}) \geq m_{\{d_1, \ldots, d_N\}}(M),
\]
so that by solving problem~\eqref{opt:theta-primal} one obtains an
upper bound for~$m_H(M)$.

Now, when~$M$ is a compact, connected, rank-one symmetric space,
one may decompose a positive and invariant kernel~$K\colon M \times M
\to \R$ in terms of Jacobi polynomials, as stated in the theorem below
(cf.~Askey~\cite{Askey}, page~65). We denote the Jacobi polynomial of
degree~$k$ with parameters~$\alpha$ and~$\beta$ by~$\Pab_k$, and we
normalize it so that~$\Pab_k(1) = 1$. Notice that this is not the
normalization commonly found in the literature on Jacobi polynomials
(for background on Jacobi polynomials, see e.g.~the book by
Szeg\"o~\cite{Szego}).

\begin{theorem}
\label{thm:decomp}
Let~$M$ be a compact, connected, rank-one symmetric space. A
kernel~$K\colon M \times M \to \R$ is positive and invariant if and
only if there are numbers~$\alpha$ and~$\beta$ such that
\begin{equation}
\label{eq:k-dec}
K(x, y) = \sum_{k = 0}^\infty f_k \Pab_k(\cos d(x, y))
\end{equation}
for some nonnegative numbers~$f_0$, $f_1$, \dots\ such
that~$\sum_{k=0}^\infty f_k$ converges, in which case the series
in~\eqref{eq:k-dec} converges absolutely and uniformly over~$M \times
M$.
\end{theorem}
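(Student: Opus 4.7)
The theorem is a Schoenberg-type decomposition, extending the classical result for the sphere to the rank-one symmetric setting, and I would treat the two directions separately. For the sufficiency (``if''), the key point is that each summand $\Pab_k(\cos d(x,y))$ is itself a positive and invariant kernel on $M$. Invariance is immediate since $\sigma \in G$ is an isometry. Positivity follows from the addition formula for Jacobi polynomials on rank-one compact symmetric spaces: relative to an orthonormal basis of the $k$-th irreducible $G$-invariant subspace of $L^2(M)$, the normalized zonal spherical function equals a sum of products of basis vectors, hence is visibly the Gram kernel of a finite family of functions. Since positivity and invariance are preserved by nonnegative linear combinations, and since the bound $|\Pab_k(\cos d(x,y))| \leq \Pab_k(1) = 1$ together with $\sum_k f_k < \infty$ gives absolute and uniform convergence by the Weierstrass $M$-test, the sufficiency follows.

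For necessity (``only if''), I would proceed in three steps. First, since $G$ acts transitively on pairs of points at every fixed distance (two-point homogeneity), invariance of $K$ forces $K(x,y) = \phi(\cos d(x,y))$ for some continuous $\phi\colon [-1,1] \to \R$. Second, I would invoke the Peter--Weyl-type decomposition $L^2(M) = \widehat{\bigoplus}_{k=0}^\infty H_k$ into pairwise orthogonal irreducible $G$-modules, and use the classical fact (e.g.\ from Helgason or Askey) that on a rank-one compact symmetric space the zonal spherical function of $H_k$ is $\Pab_k(\cos d(x,y))$ for a specific pair $(\alpha,\beta)$ depending only on $M$. Expanding $\phi$ in these Jacobi polynomials then yields, at least in the $L^2$ sense, an expansion $K(x,y) = \sum_{k=0}^\infty f_k \Pab_k(\cos d(x,y))$.

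Third, positivity of $K$ is equivalent to positivity of the integral operator $T_K$ on $L^2(M)$; because $K$ is $G$-invariant, $T_K$ commutes with the unitary $G$-action, so by Schur's lemma it acts as a scalar on each irreducible $H_k$, and a direct computation using the reproducing-kernel property of the zonal spherical function shows that this scalar is a positive multiple of $f_k$, forcing $f_k \geq 0$. Absolute and uniform convergence and the summability $\sum_k f_k < \infty$ then follow from Mercer's theorem applied to the continuous positive kernel $K$ on the compact space $M \times M$, together with the normalization $\Pab_k(1) = 1$, which makes $\sum_k f_k$ equal to $\int_M K(x,x)\,d\mu(x)$. The step I expect to be the main obstacle is the case-by-case identification, via Wang's classification, of the parameters $(\alpha,\beta)$ and of the zonal spherical functions as the Jacobi polynomials $\Pab_k(\cos d(x,y))$; this requires the explicit harmonic analysis of each of $S^{n-1}$, $\RP^{n-1}$, $\CP^{n-1}$, $\HP^{n-1}$, and $\OP^2$, which we would quote from Askey rather than re-derive.
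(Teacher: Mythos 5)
The paper itself does not prove Theorem~\ref{thm:decomp}; it is quoted directly from Askey~\cite{Askey}, page~65. So there is no in-paper proof to compare against, and the question is only whether your sketch is sound. It is, and it follows the standard Bochner--Schoenberg route that Askey and the other classical references use. A few points are worth tightening if this were to be written out in full.

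For sufficiency, the uniform bound $|\Pab_k(\cos d(x,y))| \leq \Pab_k(1) = 1$ that drives the Weierstrass test is not automatic for arbitrary $(\alpha,\beta)$. You should either invoke Szeg\"o's Theorem~7.32.1, which applies because every row of Table~\ref{tab:alpha-beta} satisfies $\alpha \geq \beta \geq -1/2$, or (more elegantly) derive it from the positivity you just established: for a positive kernel, $|K(x,y)|^2 \leq K(x,x)K(y,y)$, so $|\Pab_k(\cos d(x,y))| \leq \Pab_k(1)$. For necessity, the chain --- two-point homogeneity reduces $K$ to a function $\phi(\cos d(x,y))$; the Peter--Weyl decomposition $L^2(M) = \widehat{\bigoplus}_k H_k$ with zonal spherical function $\Pab_k(\cos d(x,y))$; Schur's lemma making $T_K$ a nonnegative scalar $\lambda_k$ on each $H_k$; Mercer's theorem for the continuous positive kernel giving absolute and uniform convergence and the finiteness $\sum_k f_k = K(x,x) < \infty$ --- is exactly right. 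The one place your sketch is genuinely thin is the ``direct computation'' tying $\lambda_k$ to $f_k$: this is the Funk--Hecke step, $\int_M K(x,y)Y(y)\,d\mu(y) = \lambda_k Y(x)$ for $Y \in H_k$, together with the orthogonality and normalization of the $\Pab_k$, and it is where one actually sees that $f_k \geq 0$ is equivalent to $\lambda_k \geq 0$. You correctly identify the case-by-case identification of $(\alpha,\beta)$ and of the zonal spherical functions, via Wang's classification, as the part best quoted from the literature rather than re-derived; that is also what the paper does by citing Askey.
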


The parameters~$\alpha$ and~$\beta$ depend on the space~$M$; they are
summarized in Table~\ref{tab:alpha-beta}.

\begin{table}[ht]
\begin{center}
\begin{tabular}{llcccc}
&&{\sl Real}&&\\
\multicolumn{2}{c}{{\sl Space}}&{\sl dimension}&$\alpha$&$\beta$\\[1pt]
\noalign{\hrule\vskip2pt}
Unit sphere&$S^{n-1}$&$n-1$&$(n-3)/2$&$(n-3)/2$\\
Real projective space&$\RP^{n-1}$&$n-1$&$(n-3)/2$&$-1/2$\\
Complex projective space&$\CP^{n-1}$&$2(n-1)$&$n-2$&0\\
Quaternionic projective space&$\HP^{n-1}$&$4(n-1)$&$2n-3$&1\\
Octonionic projective plane&$\OP^2$&16&7&3\\
\hline
\end{tabular}
\end{center}
\bigskip

\caption{Values of~$\alpha$ and~$\beta$ for Theorem~\ref{thm:decomp}
  depending on the space considered.}
\label{tab:alpha-beta}
\end{table}

Using Theorem~\ref{thm:decomp}, one may rewrite the optimization
problem~\eqref{opt:theta-primal}, obtaining that~$\vartheta(M, \{d_1,
\ldots, d_N\})$ is equal to the optimal value of the following
infinite-dimensional linear programming problem in variables~$f_k$:
\begin{equation}
\label{opt:theta-dec-primal}
\vcenter{\halign{\hfil#\enskip&#\hfil&\quad#\hfil\cr
$\sup$&$f_0$\breathe1\cr
&$\sum_{k=0}^\infty f_k = 1$,\breathe1\cr
&$\sum_{k=0}^\infty f_k \Pab_k(\cos d_i) = 0$&for~$i = 1$, \dots,~$N$,\breathe1\cr
&$f_k \geq 0$&for~$k = 0$, $1$, \dots.\cr
}}
\end{equation}
In this reformulation, we used the fact that~$K$ is invariant and~$M$
is homogeneous, so that all the diagonal entries of~$K$ are the same,
and therefore equal to~$1$. Moreover, we also use our
normalization~$\Pab_k(1) = 1$ for the Jacobi polynomials, and the
orthogonality property of the Jacobi polynomials.

A possible dual for problem~\eqref{opt:theta-dec-primal} is the
following linear programming problem on variables~$z_0$, $z_1$,
\dots,~$z_N$ with infinitely many constraints:
\begin{equation}
\label{opt:theta-dec-dual}
\vcenter{\halign{\hfil#\enskip&#\hfil&\quad#\hfil\cr
$\inf$&$z_0$\cr
&$z_0 + z_1 + \cdots + z_N \geq 1$,\cr
&$z_0 + z_1 \Pab_k(\cos d_1) + \cdots + z_N \Pab_k(\cos d_N) \geq
0$&for~$k = 1$, $2$, \dots.\cr
}}
\end{equation}

It is easy to show that weak duality holds
between~\eqref{opt:theta-dec-primal} and~\eqref{opt:theta-dec-dual},
that is, that any feasible solution of~\eqref{opt:theta-dec-dual}
provides an upper bound for the optimal value
of~\eqref{opt:theta-dec-primal}. Indeed, if~$f_0$, $f_1$, \dots\ is a
feasible solution of~\eqref{opt:theta-dec-primal} and~$(z_0, z_1,
\ldots, z_N)$ is a feasible solution of~\eqref{opt:theta-dec-dual},
then
\[
f_0 \leq \sum_{k=0}^\infty f_k (z_0 + z_1 \Pab_k(\cos d_1) + \cdots +
z_N \Pab_k(\cos d_N)) = z_0,
\]
as we wanted.

The optimization problem~\eqref{opt:theta-dec-dual} is the tool we use
to prove the main result. Notice that, because of weak
duality, it is not necessary to solve~\eqref{opt:theta-dec-dual} to
optimality in order to get an upper bound for~$m_H(M)$, any feasible
solution will provide such a bound.

\section{A proof of the main theorem}
\label{sec:dimensiontwo}

In this section we make the statement of our main result 
precise by giving a condition that, if satisfied by the distances,
implies exponential density decay. More specifically we prove the
following theorem.

\begin{theorem}
\label{thm:main}
Let~$M$ be a compact, connected, rank-one symmetric space of real
dimension at least~$2$ and let~$N \geq 1$. There is a number~$0 < d_0
< \pi / 2$ and a function~$r\colon (0, d_0) \to (0, d_0)$, depending
on~$N$, such that if numbers~$d_1 > \cdots > d_N$ are picked from the
interval~$(0, d_0)$ so as to satisfy
\begin{equation}
\label{eq:dist-cond}
d_{i+1} \leq r(d_i)\qquad\text{for~$i = 1$, \dots,~$N-1$},
\end{equation}
then~$m_{\{d_1, \ldots, d_N\}}(M) \leq 2^{-N}$.
\end{theorem}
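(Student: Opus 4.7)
The plan is to invoke weak duality between~\eqref{opt:theta-dec-primal} and~\eqref{opt:theta-dec-dual} and to exhibit a feasible solution of~\eqref{opt:theta-dec-dual} with objective $z_0 \leq 2^{-N}$. The ansatz is
\[
z_0 = 2^{-N}, \qquad z_i = 2^{-i} \quad (i = 1, \ldots, N),
\]
so that $z_0 + z_1 + \cdots + z_N = 1$ automatically, reducing the problem to the choice of $d_1 > \cdots > d_N$ in $(0, d_0)$ ensuring
\[
F(k) \;:=\; 2^{-N} + \sum_{i=1}^{N} 2^{-i} \, \Pab_k(\cos d_i) \;\geq\; 0 \qquad (k \geq 1).
\]

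Two quantitative facts about Jacobi polynomials drive the verification. The Mehler-Heine formula $\Pab_k(\cos(x/k)) \to \Gamma(\alpha+1)(x/2)^{-\alpha} J_\alpha(x)$, together with $\Pab_k(1)=1$, supplies for each $\eta>0$ a constant $c_\eta>0$ such that $\Pab_k(\cos d)\geq 1-\eta$ whenever $kd\leq c_\eta$. Szeg\"o's asymptotic gives $|\Pab_k(\cos d)|\leq C(kd)^{-\alpha-1/2}$ for $kd$ large and $d$ bounded away from $\pi$; since $\alpha\geq 0$ for every space in Table~\ref{tab:alpha-beta} of real dimension at least $2$, this decays to zero and yields a threshold $T_\epsilon$ with $|\Pab_k(\cos d)|\leq\epsilon$ for $kd\geq T_\epsilon$. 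Combining the two with the observation that the limit function $\Gamma(\alpha+1)(x/2)^{-\alpha} J_\alpha(x)$ attains its minimum in the first negative lobe of $J_\alpha$, a value strictly above $-1$, gives a uniform bound $\Pab_k(\cos d)\geq -c_*$ with some $c_*<1$, valid for all $k\geq 1$ and $d\in(0,d_0]$ with $d_0$ sufficiently small.

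Choose $\eta\in(0,1-c_*)$ and $\epsilon\in(0,(1-c_*)2^{-N}]$, and set $r(d)=(c_\eta/T_\epsilon)\,d$. The hypothesis $d_{i+1}\leq r(d_i)$ then forces $d_i/d_{i+1}\geq T_\epsilon/c_\eta$. Fix $k\geq 1$. Because of this separation, at most one index $i^*\in\{1,\ldots,N\}$ can have $kd_{i^*}\in[c_\eta,T_\epsilon]$; for $i<i^*$ one has $kd_i\geq T_\epsilon$ (Szeg\"o regime) and for $i>i^*$ one has $kd_i\leq c_\eta$ (Mehler-Heine regime). Estimating termwise,
\[
F(k)\;\geq\; 2^{-N}-\epsilon\sum_{i<i^*}2^{-i}-c_*\cdot 2^{-i^*}+(1-\eta)\sum_{i>i^*}2^{-i},
\]
and using $\sum_{i>i^*}2^{-i}=2^{-i^*}-2^{-N}$ together with $i^*\leq N$ one obtains $F(k)\geq\eta\cdot 2^{-N}-\epsilon+(1-\eta-c_*)2^{-i^*}\geq 2^{-N}(1-c_*)-\epsilon\geq 0$. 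The edge cases in which no such $i^*$ exists (all $kd_i\leq c_\eta$, or all $kd_i\geq T_\epsilon$) use only one of the two estimates and are similar.

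The main technical obstacle is making the constants $c_\eta$, $T_\epsilon$, and $c_*$ explicit enough that a single function $r(d)$ works uniformly in $k$, and tracking how $r$ and $d_0$ depend on $N$. Since $\epsilon$ must be of order $2^{-N}$, the Szeg\"o threshold $T_\epsilon$ grows polynomially in $2^N$, so $r(d)/d$ has to shrink accordingly; bookkeeping the three regimes of $k$ against this uniform cutoff yields the precise statement of Theorem~\ref{thm:main}.
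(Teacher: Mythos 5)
Your dual-LP strategy and the structure of the feasibility check are essentially the same as the paper's: you use geometric weights $z_i=2^{-i}$ (the paper uses $z_i=\lambda^{i-1}/S$ with $\lambda\leq 1/2$), and you organize the estimate by separating, for each $k$, the distances $d_i$ into a ``small $kd_i$'' regime where $\Pab_k(\cos d_i)$ is close to $1$, a ``large $kd_i$'' regime where it is close to $0$, and a single transitional index where only a uniform lower bound is available. The paper does the same thing, just packaged as an induction and with a more implicit definition of $r(d)$; your linear $r(d)=(c_\eta/T_\epsilon)d$ is a pleasant concretization. The arithmetic reducing $F(k)\geq 0$ to $\epsilon\leq(1-c_*)2^{-N}$ is correct.

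The real gap is the sentence asserting a uniform bound $\Pab_k(\cos d)\geq -c_*$ with $c_*<1$ ``for all $k\geq 1$ and $d\in(0,d_0]$.'' This is exactly the content of the paper's Lemma~\ref{lem:jac-bounds}, which is the genuinely hard part of the argument, and it does not follow just from citing Mehler--Heine and Szeg\"o. Two separate issues are glossed over. First, you need $\min_x\Omega_\alpha(x)>-1$ for \emph{every} admissible $\alpha\geq 0$, not merely for $\alpha=0$; the paper's Claim~B establishes this (with the sharper bound $-0.45$) using Landau's monotonicity of $J_\alpha(j_{\alpha+1})$ in $\alpha$ and a recursion in $\alpha$ via the three-term Bessel relation --- this is a nontrivial fact and you state it as an ``observation.'' Second, and more importantly, a limit statement like Mehler--Heine does not by itself give a bound valid \emph{uniformly over all $k$}: the convergence $\Pab_k(\cos(x/k))\to\Omega_\alpha(x)$ is uniform on compact $x$-intervals only, so patching the Bessel regime, the transitional regime, and the oscillatory decay regime into one inequality holding for every $k$ (in particular for the intermediate $k$'s at which $\Pab_k$ has its most negative extremum inside $(\cos d_0,1)$) needs a genuine argument. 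The paper's Lemma~\ref{lem:jac-bounds} supplies it: Claim~A locates the minimum of $\Pab_k$ on $[0,1]$ at the rightmost extremum for all $k$ beyond some $k_0$, and then the Wong--Zhang asymptotic monotonicity~\eqref{eq:jac-last-extremum} together with~\eqref{eq:jacobi-limit} shows these extremal values increase to the Bessel limit, so the finitely many remaining $k$ can be handled by continuity. Without something equivalent, the existence of your $c_*<1$, and hence of the threshold $T_\epsilon$ you use to define $r$, is unproved. The rest of your argument (the trichotomy, the choice of $\eta,\epsilon$, the summation) is fine once this is supplied.
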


Recall that we denote by~$\Pab_k$ the Jacobi polynomial of degree~$k$
and parameters~$\alpha$ and~$\beta$, and that we normalize it so
that~$\Pab_k(1) = 1$. This normalization differs from the one normally
adopted in the literature about Jacobi polynomials, where one usually
sets
\[
\Pab_k(1) = {k + \alpha\choose k}.
\]

One important property of our normalization is the following:
\begin{equation}
\label{eq:jac-zero}
\dimen0=\hsize \advance\dimen0 by-1.5cm
\vcenter{\hsize=\dimen0%
\noindent
For~$\alpha \geq 0$ and all intervals~$[a, b] \subseteq (-1, 1)$, we
have that~$\Pab_k(u) \to 0$ as~$k \to \infty$ uniformly in the
interval~$[a, b]$.}
\end{equation}
This follows from the asymptotic formula for the Jacobi polynomials
given in Theorem~8.21.8 of Szeg\"o~\cite{Szego}, together with the
fact that for~$\alpha \geq 0$ we have~${k+\alpha\choose k} \geq
1$. This fact will play an important role in the rest of this section.

For~$\alpha$, $\beta > -1$ and~$-1 < t < 1$, let
\[
\lab(t) = \inf\{\, \Pab_k(t) : \text{$k = 0$, $1$, \dots}\,\}.
\]
Note that~$\lab(t) < 0$ for all~$-1 < t < 1$. This follows from a
simple application of the interlacing property for Jacobi
polynomials (cf.~Theorem~3.3.2 in Szeg\"o~\cite{Szego}), or
alternatively from the asymptotic formula for Jacobi polynomials
(cf.~Theorem~8.21.8 in Szeg\"o~\cite{Szego}).

In our proof of Theorem~\ref{thm:main} we will use the following
rather technical result about the behavior of~$\lab(t)$.

\begin{lemma}
\label{lem:jac-bounds}
For all~$\alpha \geq 0$ and~$\beta \geq -1/2$ with~$\alpha \geq
\beta$, there is a number~$t_0$ with~$0 < t_0 < 1$ such that~$\lab(t)
\geq -1/2$ for all~$t_0 < t < 1$.
\end{lemma}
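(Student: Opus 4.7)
The plan is to handle small and large indices $k$ separately and then combine. Writing $t=\cos\theta$ with $\theta>0$ small, the goal is to produce a $\theta_0>0$ so that $\Pab_k(\cos\theta)\geq -1/2$ for every $k\geq 0$ and every $\theta\in(0,\theta_0]$; this immediately yields $\lab(t)\geq -1/2$ on the interval $t\in(t_0,1)$ with $t_0=\cos\theta_0$.

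For the large-$k$ regime I would rely on the Mehler--Heine and Hilb asymptotics for Jacobi polynomials (Szeg\"o, Chapter~8). In our normalization $\Pab_k(1)=1$, the Mehler--Heine formula gives, as $k \to \infty$,
\[
\Pab_k(\cos(x/k)) \longrightarrow g(x) := 2^\alpha \Gamma(\alpha+1)\,x^{-\alpha}J_\alpha(x),
\]
uniformly on compact subsets of $[0,\infty)$, while Hilb's formula supplies an envelope estimate of the form $|\Pab_k(\cos\theta)|\leq C(\alpha,\beta)(k\theta)^{-\alpha-1/2}$ valid on $c/k\leq\theta\leq\pi-c$. The essential analytic input is the strict inequality $\inf_{x\geq 0}g(x)>-1/2$ that holds precisely when $\alpha\geq 0$: for $\alpha=0$ one has $g=J_0$ with $\min J_0\approx -0.403$, while for $\alpha>0$ the damping factor $x^{-\alpha}$ only pulls the minimum closer to zero. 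Splitting the range into $k\theta=O(1)$ (where the Mehler--Heine limit controls the polynomial) and $k\theta\gg 1$ (where the envelope bound does), I would fix $\theta_0\in(0,\pi/2)$ and a threshold $K$ so large that $\Pab_k(\cos\theta)\geq -1/2+\eta$ holds uniformly in $k\geq K$ and $\theta\in(0,\theta_0]$, with a margin $\eta>0$ absorbing the asymptotic error terms.

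For the finitely many remaining indices $k<K$ I would appeal to pure continuity: each $\Pab_k$ is continuous with $\Pab_k(1)=1$, so there exists $t_1<1$ such that $\Pab_k(t)\geq 1/2$ for every $t\in(t_1,1)$ and every $k<K$. Setting $t_0:=\max\{t_1,\cos\theta_0\}$ then finishes the argument.

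The principal obstacle is arranging uniformity in $k$ and $\theta$ together: Hilb's asymptotic has qualitatively different behavior in the three regimes $k\theta \ll 1$, $k\theta\asymp 1$, and $k\theta\gg 1$, each with its own error bound, and one must verify that the worst of these errors still fits inside the slack supplied by the strict inequality $\inf g>-1/2$. This is exactly where the hypothesis $\alpha\geq 0$ is used: for $\alpha<0$ the limit function $g$ drops below $-1/2$ (at $\alpha=-1/2$ one obtains $g(x)=\cos x$ with minimum $-1$), and the argument would collapse. The condition $\alpha\geq\beta\geq -1/2$ enters more indirectly, guaranteeing that Szeg\"o's maximum principle and the cleanest form of the envelope bound are available.
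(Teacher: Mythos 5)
Your high-level split --- large $k$ via Bessel asymptotics, small $k$ via continuity from $\Pab_k(1)=1$ --- is a legitimate alternative route to the paper's argument, which instead reduces the problem algebraically: the paper first proves (via a Sonine--P\'olya type monotonicity of $g(u)=(\Pab_k(u))^2+\frac{1-u^2}{k(k+\alpha+\beta+1)}(\Pab_k{}'(u))^2$) that for all large $k$ the minimum of $\Pab_k$ on $[0,1]$ is attained at its \emph{last} local extremum $\tabm(+1,+1)[k-1,k-1]$, then invokes Wong--Zhang's eventual monotonicity of the sequence $k\mapsto\Pab_k(\tabm(+1,+1)[k-1,k-1])$ together with the limit~\eqref{eq:jacobi-limit}, and finally handles $j<k$ not by continuity but by showing $\Pab_0(t),\ldots,\Pab_k(t)$ is decreasing for $t$ beyond the last zero of $\Pabm(+1,)_{k-1}$. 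Your version trades that machinery for a uniform Mehler--Heine/Hilb estimate, which is workable but needs the three ranges $k\theta\ll 1$, $k\theta\asymp 1$, $k\theta\gg 1$ glued together with quantitative error control; you flag this yourself, and it is genuinely the delicate part.

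The real gap, however, is in your treatment of the constant. Both routes reduce to the same essential inequality: the global minimum of
\[
g(x)=\Omega_\alpha(x)=\Gamma(\alpha+1)\Bigl(\frac{2}{x}\Bigr)^{\!\alpha}J_\alpha(x),
\]
attained at $x=j_{\alpha+1}$, must exceed $-1/2$ for \emph{every} $\alpha\ge 0$. You assert this with ``for $\alpha>0$ the damping factor $x^{-\alpha}$ only pulls the minimum closer to zero,'' but that is not an argument: the factor $x^{-\alpha}$ is compensated by the prefactor $2^\alpha\Gamma(\alpha+1)$, and the value $J_\alpha(j_{\alpha+1})$ itself varies with $\alpha$, so there is no elementary monotonicity here. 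This is precisely the point where the paper invokes two nontrivial facts --- Landau's theorem that $\alpha\mapsto J_\alpha(j_{\alpha+1})$ is increasing (handling $0\le\alpha<1$), and the recurrence $J_{\alpha-1}+J_{\alpha+1}=\frac{2\alpha}{t}J_\alpha$, which at $t=j_{\alpha+1}$ gives $\Omega_\alpha(j_{\alpha+1})=\Omega_{\alpha-1}(j_{\alpha+1})\ge\min\Omega_{\alpha-1}$ and closes an induction for $\alpha\ge 1$. Without something equivalent, your proposal does not establish the key bound $\inf g>-1/2$ beyond $\alpha=0$, and so the conclusion does not follow.
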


Before proving this lemma, we first prove
Theorem~\ref{thm:main}.

\begin{proof}[Proof of Theorem~\ref{thm:main}]
Let~$t_0$ be given as in Lemma~\ref{lem:jac-bounds} and set~$d_0 =
\arccos t_0$. We now analyze the linear programming
problem~\eqref{opt:theta-dec-dual} for distances~$d_1$, \dots,~$d_N$,
with~$\alpha$ and~$\beta$ given as in Table~\ref{tab:alpha-beta}
according to the space~$M$ considered. Here it is important to observe
that, since the real dimension of~$M$ is at least~$2$, we have~$\alpha
\geq 0$.

If~$N = 1$, then since~$\lab(t) \geq -1/2$ for all~$t_0 < t < 1$, it
is easy to see from~\eqref{opt:theta-dec-dual} that~$m_{\{d_1\}}(M)
\leq 1/2$ for all~$d_0 > d_1 > 0$, just by setting~$z_0 = 1/2$ and~$z_1 = 1$.

So suppose~$N > 1$. Let
\[
\lambda = |\inf\{\, \lab(u) : t_0 < u < 1\,\}|.
\]
By the choice of~$t_0$, and since~$\lab(t) < 0$ for~$-1 < t < 1$, we
must have~$\lambda \leq 1/2$.  Write~$\varepsilon = \lambda^{N+1}/((1
- \lambda)(N-1))$.

We now define function~$r$. So let~$d \in (0, d_0)$ be given. We show
how to pick a number~$r(d) \in (0, d_0)$ having the following
property: 
\begin{equation}
\label{eq:rd-prop}
\dimen0=\hsize \advance\dimen0 by-1.5cm
\vcenter{\hsize=\dimen0%
\noindent
For every~$0 < s \leq r(d)$, if~$\Pab_k(\cos s) \leq 1 - \varepsilon$,
then~$|\Pab_k(\cos d')| < \varepsilon$ for all~$d \leq d' \leq \pi / 2$.}
\end{equation}

To prove that we may pick such a number~$r(d)$, we use the fact
that~$\Pab_k(t) \to 0$ as~$k \to \infty$ uniformly in the
interval~$[0, \cos d]$, as observed in~\eqref{eq:jac-zero}. So there
is a~$k_0$ such that~$|\Pab_k(t)| < \varepsilon$ for all~$0 \leq t
\leq \cos d$ and~$k > k_0$. Now, since each~$\Pab_k$ is continuous
and~$\Pab_k(1) = 1$, we may pick a number~$u_0$ such that~$\Pab_k(u) >
1 - \varepsilon$ for all~$u_0 \leq u \leq 1$ and~$k \leq k_0$. But
then we may set~$r(d) = \arccos u_0$ and~\eqref{eq:rd-prop} is
satisfied.

Suppose now that numbers~$d_1$, \dots,~$d_N \in (0, d_0)$
with~$d_1 > \cdots > d_N$ are given which
satisfy~\eqref{eq:dist-cond}. We claim that, for~$1 \leq j \leq N$,
\begin{equation}
\label{eq:decay-main-claim}
\sum_{i=1}^j \lambda^{i-1} \Pab_k(\cos d_i) \geq -\lambda^j -
\varepsilon(j-1)\qquad\text{for all~$k \geq 0$}.
\end{equation}

Before proving the claim, let us show how to apply it in order to
prove the theorem. Taking~$j = N$ we see that
\[
\sum_{i=1}^N \lambda^{i-1} \Pab_k(\cos d_i) \geq -\lambda^N -
\varepsilon(N-1)\qquad\text{for all~$k \geq 0$}.
\]
So, letting~$S = 1 + \lambda + \cdots + \lambda^N + \varepsilon
(N-1)$, we may set
\[
z_0 = \frac{\lambda^N + \varepsilon(N-1)}{S}\qquad\text{and}\qquad
z_i = \frac{\lambda^{i-1}}{S}\quad\text{for~$i = 1$, \dots,~$N$}
\]
and check that this is a feasible solution
of~\eqref{opt:theta-dec-dual}. But then, since~$\lambda \leq 1/2$ and
also~$\varepsilon = \lambda^{N+1}/((1 - \lambda)(N-1))$, from the
weak duality relation between~\eqref{opt:theta-dec-primal}
and~\eqref{opt:theta-dec-dual} we get that
\[
\begin{split}
m_{\{d_1, \ldots, d_N\}}(M)&\leq \frac{\lambda^N + \varepsilon(N-1)}{1 +
  \lambda + \cdots + \lambda^N + \varepsilon(N-1)}\\
&= \frac{\lambda^N + \lambda^{N+1} / (1 - \lambda)}{(1 - \lambda^{N+1})/(1
  - \lambda) + \lambda^{N+1}/(1 - \lambda)}\\
&= \lambda^N (1 - \lambda) + \lambda^{N+1}\\
&\leq 2^{-N},
\end{split}
\]
where we use that~$\lambda(1 - \lambda) \leq 1/4$, and the theorem
follows.

We finish by proving~\eqref{eq:decay-main-claim}. For~$j = 1$, the
statement is true by the definition of~$\lambda$. Now suppose the
statement is true for some~$1 \leq j < N$; we show that it is also
true for~$j+1$. To this end, let~$k \geq 0$ be an
integer. If~$\Pab_k(\cos d_{j+1}) > 1 - \varepsilon$, then by using
the induction hypothesis and since~$\lambda \leq 1$ we get
\[
\begin{split}
\sum_{i=1}^{j+1} \lambda^{i-1} \Pab_k(\cos d_i)&= \lambda^j
\Pab_k(\cos d_{j+1}) + \sum_{i=1}^j \lambda^{i-1} \Pab_k(\cos d_i)\\
&\geq \lambda^j (1 - \varepsilon) - \lambda^j - \varepsilon(j-1)\\
&\geq -\varepsilon j.
\end{split}
\]

If, on the other hand,~$\Pab_k(\cos d_{j+1}) \leq 1 - \varepsilon$, we
know from the choice of the~$d_i$ and from~\eqref{eq:rd-prop}
that~$|\Pab_k(\cos d_i)| < \varepsilon$ for~$i = 1$, \dots,~$j$. But
then we have
\[
\begin{split}
\sum_{i=1}^{j+1} \lambda^{i-1} \Pab_k(\cos d_i)&= \lambda^j
\Pab_k(\cos d_{j+1}) + \sum_{i=1}^j \lambda^{i-1} \Pab_k(\cos d_i)\\
&\geq -\lambda^{j+1} - \varepsilon j,
\end{split}
\]
proving~\eqref{eq:decay-main-claim}.
\end{proof}

Let~$\tab[k,1] < \cdots < \tab[k,k]$ be the zeros of~$\Pab_k$ in
ascending order. In the proof of Lemma~\ref{lem:jac-bounds} we will
use the following two facts proven by Wong and
Zhang~\cite{WongZ}. Let~$\alpha \geq 0$ and~$\beta \geq -1/2$. The
first fact is that
\begin{equation}
\label{eq:jacobi-limit}
\lim_{k \to \infty} \Pab_k(\tabm(+1,+1)[k-1,k-1]) = 
\Gamma(\alpha+1) \Bigl(\frac{2}{j_{\alpha+1}}\Bigr)^\alpha
J_\alpha(j_{\alpha+1}),
\end{equation}
where~$J_\nu$ is the Bessel function of the first kind of order~$\nu$
and~$j_\nu$ is its first positive zero. Note that the right-hand side
of the expression above is negative for all~$\alpha \geq 0$.

The second fact we use is that, for all large enough~$k$,
\begin{equation}
\label{eq:jac-last-extremum}
\Pab_{k+1}(\tabm(+1,+1)[k,k]) > \Pab_k(\tabm(+1,+1)[k-1,k-1]).
\end{equation}

Both these facts follow from~(1.6) in Wong and Zhang~\cite{WongZ}. In
their paper, however, formula~(1.6) is proven under the condition
that~$\alpha > \beta > -1/2$. One may verify however that the same
proof holds when~$\alpha \geq 0$ and~$\beta \geq -1/2$. Indeed, the
assumption that~$\alpha > \beta > -1/2$ is made by Wong and Zhang so
that formulas~(2.3) and~(2.6) might be used. Both formulas appear in a
paper by Frenzen and Wong~\cite{FrenzenW} (formula~(2.3) is the Main
Theorem in that paper and formula~(2.6) is Corollary~2), and there
they are stated under more general conditions that include all cases
in which~$\alpha \geq 0$ and~$\beta \geq -1/2$.

\begin{proof}[Proof of Lemma~\ref{lem:jac-bounds}]
In our proof we use the following claim; a stronger version of this
claim is stated in~(6.4.24) of Andrews, Askey, and Roy~\cite{AAR} for
the case of the ultraspherical polynomials.\medskip

\noindent
{\it Claim A.\enspace For every~$\alpha \geq 0$ and~$\beta > -1$
  with~$\alpha \geq \beta$, there is a number~$k_0 \geq 2$ such that for all~$k
  \geq k_0$ we have}
\[
\Pab_k(\tabm(+1,+1)[k-1,k-1]) = \min\{\, \Pab_k(u) : 0 \leq u \leq 1\,\}.
\]
{\it Proof.}\enspace Consider the function
\begin{equation}
\label{eq:g-jacobi}
g(u) = (\Pab_k(u))^2 + \frac{1 - u^2}{k(k+\alpha+\beta+1)}
\biggl(\frac{d\Pab_k(u)}{du}\biggr)^2.
\end{equation}
We use the identity
\[
(1 - u^2) \frac{d^2\Pab_k(u)}{du^2} + (\beta - \alpha - (\alpha +
\beta + 2)u) \frac{d\Pab_k(u)}{du} + k(k + \alpha + \beta + 1)
\Pab_k(u) = 0
\]
(cf.~(6.3.9) in Andrews, Askey, and Roy~\cite{AAR}) to obtain
\[
g'(u) = \frac{2 ((\alpha + \beta + 1) u + \alpha - \beta)}{k(k +
  \alpha + \beta + 1)} \biggl(\frac{d \Pab_k(u)}{du}\biggr)^2.
\]

From this it is at once clear that, since~$\alpha \geq 0$,~$\beta >
-1$, and~$\alpha \geq \beta$, we will have~$g'(u) \geq 0$ for
all~$0 \leq u \leq 1$. This means that~$g$ is nondecreasing in the
interval~$(0, 1)$. So, if for some~$k$ and~$i$ we
have~$\tabm(+1,+1)[k-1,i] > 0$, then since~$\tabm(+1,+1)[k-1,i]
\leq \tabm(+1,+1)[k-1,k-1] < 1$ we also have
\[
g(\tabm(+1,+1)[k-1,i]) \leq g(\tabm(+1,+1)[k-1,k-1]),
\]
and since the zeros of~$\Pabm(+1,+1)_{k-1}$ correspond to the extrema
of~$\Pab_k$ (cf.~(6.3.8) in Andrews, Askey, and Roy~\cite{AAR}),
with~\eqref{eq:g-jacobi} this amounts to
\begin{equation}
\label{eq:jac-dec-extrema}
(\Pab_k(\tabm(+1,+1)[k-1,i]))^2 \leq (\Pab_k(\tabm(+1,+1)[k-1,k-1]))^2.
\end{equation}

Now, we know that there is a~$k_0$ such that~$\tabm(+1,+1)[k-1,k-1] >
0$ for all~$k \geq k_0$ (cf.~Theorem~6.1.1 of
Szeg\"o~\cite{Szego}). Suppose also~$k_0$ is large enough so
that~$\Pab_k(\tabm(+1,+1)[k-1,k-1])$ is close to the right-hand side
of~\eqref{eq:jacobi-limit} for all~$k \geq
k_0$. From~\eqref{eq:jac-zero}, we may also assume that~$k_0$ is such
that~$|\Pab_k(u)|$ is close enough to zero in some
interval~$[-\varepsilon, 0]$ for some~$0 < \varepsilon < 1$ chosen
arbitrarily.  But then it is clear that the minimum of~$\Pab_k(u)$ in
the interval~$[0, 1]$ is achieved in the interior of the interval, and
then since the~$\tabm(+1,+1)[k-1,i]$ are the extrema of~$\Pab_k$, and
since~$\tabm(+1,+1)[k-1,k-1]$ is a local minimum of~$\Pab_k$, together
with~\eqref{eq:jac-dec-extrema} we are done.\hfill$\lhd$\medskip

\noindent {\it Claim B.\enspace For~$\alpha \geq 0$ the expression on
the right-hand side of~\eqref{eq:jacobi-limit} is always at
least~$-0.45$.}\medskip

\noindent
{\it Proof.}\enspace For~$\alpha \geq 0$ and~$t > 0$ write
\[
\Omega_\alpha(t) = \Gamma(\alpha+1) \Bigl(\frac{2}{t}\Bigr)^\alpha
J_\alpha(t).
\]
The global minimum of~$\Omega_\alpha$ is
attained at~$j_{\alpha+1}$, the first positive zero of~$J_{\alpha+1}$,
and it is negative (cf.~(4.6.2) in Andrews, Askey, and
Roy~\cite{AAR} and Section~15.31 of Watson~\cite{Watson}).

We first show that~$\Omega_\alpha(t) \geq -0.45$ for all~$0 \leq
\alpha < 1$. Indeed, the zero~$j_{\alpha+1}$ is increasing as a
function of~$\alpha$ (cf.~(2) in Section~15.6 of Watson~\cite{Watson})
and one may verify numerically that~$j_1 = 3.8317\ldots \geq 2$. So
from the definition of~$\Omega_\alpha$ and since the global minimum
of~$\Omega_\alpha$ is attained at~$j_{\alpha+1}$ we see that
\[
\Omega_\alpha(t) \geq J_\alpha(j_{\alpha+1})
\]
whenever~$0 \leq \alpha < 1$. Landau~\cite{Landau} has shown
that~$J_\alpha(j_{\alpha+1})$ is increasing as a function
of~$\alpha$. But then for~$0 \leq \alpha < 1$ we have
\[
\Omega_\alpha(t) \geq J_0(j_1) = -0.402759\ldots \geq -0.45,
\]
as we wanted.

Suppose now~$\alpha \geq 1$. We use the formula
\[
J_{\alpha-1}(t) + J_{\alpha+1}(t) = \frac{2\alpha}{t} J_\alpha(t)
\]
(cf.~(4.6.5) in Andrews, Askey, and Roy~\cite{AAR}) to see that
\[
\begin{split}
\Omega_\alpha(t)&= \Gamma(\alpha+1) \Bigl(\frac{2}{t}\Bigr)^\alpha
\frac{t}{2\alpha}(J_{\alpha-1}(t) + J_{\alpha+1}(t))\\
&= \Omega_{\alpha-1}(t) + \Gamma(\alpha)
\Bigl(\frac{2}{t}\Bigr)^{\alpha-1} J_{\alpha+1}(t).
\end{split}
\]
Since the global minimum of~$\Omega_\alpha$ is attained
at~$j_{\alpha+1}$, this implies that the global minimum
of~$\Omega_\alpha$ is at least that of~$\Omega_{\alpha-1}$. Since we
have shown that~$\Omega_\alpha(t) \geq -0.45$ for all~$0 \leq \alpha <
1$, the result then follows.\hfill$\lhd$\medskip

Now we may prove the lemma. Let~$k_0$ be given as in
Claim~A. Suppose that~$k_0$ is large enough so
that~$\Pab_k(\tabm(+1,+1)[k-1,k-1])$ is close enough to the right-hand
side of~\eqref{eq:jacobi-limit} for all~$k \geq k_0$, in such a way
that from Claim~B we see that
\[
\Pab_k(\tabm(+1,+1)[k-1,k-1]) \geq -1/2\qquad\text{for all~$k \geq
  k_0$.}
\]

Suppose also that~$k_0$ is large enough so
that~\eqref{eq:jac-last-extremum} holds for all~$k \geq k_0$. Now
take some~$k \geq k_0$ such that~$t_0 = \tabm(+1,+1)[k-1,k-1] >
0$. We show that~$\lab(t) \geq -1/2$ for all~$t_0 < t < 1$.

So fix some~$t$ such that~$t_0 < t < 1$. We begin by showing that the
sequence
\begin{equation}
\label{eq:low-seq}
\text{$\Pab_0(t)$, $\Pab_1(t)$, \dots,~$\Pab_k(t)$}
\end{equation}
is decreasing. To this end we shall use the formula
\begin{equation}
\label{eq:jac-intersec}
\Pab_j(u) - \Pab_{j+1}(u) = \frac{2j + \alpha + \beta +
  2}{2(\alpha+1)} (1 - u) \Pabm(+1,)_j(u),
\end{equation}
which is adapted to our normalization of~$\Pab_j$ from~(6.4.20) in
Andrews, Askey, and Roy~\cite{AAR}.

Now, for~$j < k$ we have
\[
\tabm(+1,)[j,j] \leq \tabm(+1,)[k-1,k-1] < \tabm(+1,+1)[k-1,k-1] < t.
\]
Here, the first inequality comes from the interlacing property. The
second inequality is a consequence of Theorem~6.21.1 of
Szeg\"o~\cite{Szego}. So~$t$ lies to the right of the rightmost zero
of~$\Pabm(+1,)_j$ and hence~$\Pabm(+1,)_j(t) > 0$. So it is clear
from~\eqref{eq:jac-intersec} that~$\Pab_j(t) > \Pab_{j+1}(t)$, proving
that~\eqref{eq:low-seq} is decreasing. 

The fact that~\eqref{eq:low-seq} is decreasing, together with Claim~A,
implies that
\begin{equation}
\label{eq:low-deg-claim}
\Pab_j(t) > \Pab_k(t) \geq \Pab_k(t_0)
\end{equation}
for all~$j < k$. Now suppose~$j > k$. We then have that
\[
\Pab_j(t) \geq \Pab_j(\tabm(+1,+1)[j-1,j-1]) >
\Pab_k(\tabm(+1,+1)[k-1,k-1]) = \Pab_k(t_0),
\]
where the first inequality follows from Claim~A and the second
inequality follows from~\eqref{eq:jac-last-extremum}. Now, since we
have by construction that~$\Pab_k(t_0) \geq -1/2$, together
with~\eqref{eq:low-deg-claim} we see that for all~$j \neq k$ we
have~$\Pab_j(t) \geq \Pab_k(t_0) \geq -1/2$, and we are done.
\end{proof}

\section{Some questions}
\label{sec:disc}

Here are some questions that arise from our approach.
\begin{enumerate}
\item Up to now we only know spaces where the density decay is either
exponential or not present at all. Are there spaces for which we have only
polynomial density decay?

\item Curvature of the unit norm ball seems to play an important role. In
all the cases we know we have exponential density decay if and only if
the unit norm ball possesses positive curvature. Is this true in
general? An affirmative result in this direction is due to
Kolountzakis~\cite{Kolountzakis} who proved this for~$\mathbb{R}^n$
equipped with an arbitrary Minkowski norm.

\item It might be interesting to understand the limits of our approach
  using the Lov\'asz theta number. Is this method always able to detect
  (exponential) density decay? For finite graphs Alon and
  Kahale~\cite{AlonKahale} were able to show that one can approximate
  the stability number by the theta number. Our method can be seen as
  a generalization of the theta number to infinite graphs.
\end{enumerate}

\section{Acknowledgements}

We would like to thank Roderick Wong for his comments and observations
concerning the results of~\cite{WongZ}. We thank the referee for
valuable comments.


\begin{thebibliography}{[99]}

\bibitem{AAR} G.E.~Andrews, R.~Askey, and R.~Roy, {\it Special
    Functions}, Encyclopedia of Mathematics and its Applications~{\bf 71},
  Cambridge University Press, Cambridge, 1999.

\bibitem{AlonKahale} 
N.~Alon and N.~Kahale,
{\it Approximating the independence number via the $\theta$-function},
Math.\ Programming~{\bf 80} (1998), 253--264. 

\bibitem{Askey} R.~Askey, {\it Orthogonal Polynomials and Special
    Functions}, Society for Industrial and Applied Mathematics,
  Philadelphia, 1975.

\bibitem{BNOV} C.~Bachoc, G.~Nebe, F.M.~de Oliveira Filho, and
  F.~Vallentin, {\em Lower bounds for measurable chromatic numbers},
  Geom.\ Funct.\ Anal.~{\bf 19} (2009), 645--661.  

\bibitem{Bukh} B.~Bukh, {\em Measurable sets with excluded distances},
  Geom.\ Funct.\ Anal.~{\bf 18} (2008), 668--697.

\bibitem{CFG} H.T.~Croft, K.J.~Falconer, and R.K.~Guy, {\em Unsolved
    problems in geometry}, Springer-Verlag, New York, 1991.

\bibitem{Falconer} K.J.~Falconer, {\em The realization of small
  distances in plane sets of positive measure}, Bull.\ London
Math.\ Soc.~{\bf 18} (1986), 475--477.

\bibitem{FrenzenW} C.L.~Frenzen and R.~Wong, {\em A uniform asymptotic
    expansion of the Jacobi polynomials with error bounds}, Canad.\
  J.\ Math.~{\bf 37} (1985), 979--1007.

\bibitem{Halmos} P.R.~Halmos, {\it Measure Theory}, Springer-Verlag,
  New York, 1974.

\bibitem{Kolountzakis}
M.N.~Kolountzakis,
{\em Distance sets corresponding to convex bodies},
Geom.\ Funct.\ Anal.~{\bf 14} (2004), 734--744. 

\bibitem{Landau} L.J.~Landau, {\em Bessel functions: monotonicity and
  bounds}, J. London Math. Soc. {\bf 61} (2000), 197--215.

\bibitem{Lovasz79} L.~Lov\'asz, {\em On the Shannon capacity of a
    graph}, IEEE Transactions on Information Theory~{\bf IT-25}
  (1979), 1--7.

\bibitem{OliveiraVallentin} F.M.~de Oliveira Filho and F.~Vallentin,
  {\em Fourier analysis, linear programming, and densities of distance
    avoiding sets in $\R^n$}, Journal of the European Mathematical
  Society~{\bf 12} (2010), 1417--1428.

\bibitem{Steinhaus} H.~Steinhaus, {\em Sur les distances des points
    dans les ensembles de mesure positive}, Fund.\ Math.~{\bf 1} (1920),
  93--104.

\bibitem{Stromberg} K.~Stromberg, {\em An elementary proof of
    Steinhaus's theorem}, Proc.\ Amer.\ Math.\ Soc.~{\bf 36} (1972),
  308.

\bibitem{Szego} G.~Szeg\"o, {\it Orthogonal polynomials}, American
  Mathematical Society Colloquium Publications Volume~XXIII, Fourth
  Edition, American Mathematical Society, Providence, 1975.

\bibitem{Szekely} L.A.~Sz\'ekely, {\em Remarks on the chromatic number
    of geometric graphs}, in: {\em Graphs and other combinatorial
    topics}, M.~Fiedler, ed., Teubner-Texte zur Mathematik~{\bf 59},
  Teubner, Leipzig, 1983, pp.~312--315.

\bibitem{Tao} T.~Tao, {\em Structure and Randomness: Pages from Year
    One of a Mathematical Blog}, American Mathematical Society,
  Providence, 2008.

\bibitem{Wang} H.C.~Wang, {\em Two-point homogeneous spaces},
  Ann.\ Math.~{\bf 55} (1952), 177--191.

\bibitem{Watson} G.N.~Watson, {\it A Treatise on the theory of Bessel
    functions}, Cambridge University Press, Cambridge, 1922.

\bibitem{Weil} A.~Weil, {\em L'int\'egration dans les groupes topologiques
  et ses applications}, Publications de l'Institut de Math\'ematique de
  l'Universit\'e de Strasbourg IV, deuxi\`eme \'edition, Hermann,
  Paris, 1965.

\bibitem{WongZ} R.~Wong and J.-M.~Zhang, {\em Asymptotic monotonicity
    of the relative extrema of Jacobi polynomials}, Canad.\ J.\
  Math.~{\bf 46} (1994), 1318--1337.

\end{thebibliography}
\end{document}